\begin{document}
\title{On the Uniqueness Sets in the Fock Space}
\subjclass[2000]{Primary 30H20, 30Exx}
\keywords{Fock Space, Uniqueness Sets}

\begin{abstract}
It was known to von Neumann in the 1950's that the integer lattice $\mathbb{Z}^2$ forms a uniqueness set for the Bargmann-Fock space.  It was later demonstrated by Seip and Wallst\'en that a sequence of points $\Gamma$ that is uniformly close to the integer lattice is still a uniqueness set.  We show in this paper that the uniqueness sets for the Fock space are preserved under much more general perturbations.
\end{abstract}

\author[M. Mitkovski]{Mishko Mitkovski$^\dagger$}
\address{Mishko Mitkovski, Department of Mathematical Sciences\\ Clemson University\\ O-110 Martin Hall, Box 340975\\ Clemson, SC USA 29634}
\email{mmitkov@clemson.edu}
\urladdr{http://people.clemson.edu/~mmitkov/}
\thanks{$\dagger$ Research supported in part by National Science Foundation DMS grant \# 1101251.}

\author[B. D. Wick]{Brett D. Wick$^\ddagger$}
\address{Brett D. Wick, School of Mathematics\\ Georgia Institute of Technology\\ 686 Cherry Street\\ Atlanta, GA USA 30332-0160}
\email{wick@math.gatech.edu}
\urladdr{www.math.gatech.edu/~wick}
\thanks{$\ddagger$ Research supported in part by National Science Foundation DMS grants \# 1001098 and \# 955432.}

\maketitle


\newcommand{\norm}[1]{\ensuremath{\left\|#1\right\|}}
\newcommand{\abs}[1]{\ensuremath{\left\vert#1\right\vert}}
\newcommand{\p}{\ensuremath{\partial}}
\newcommand{\pr}{\mathcal{P}}

\newcommand{\pbar}{\ensuremath{\bar{\partial}}}
\newcommand{\db}{\overline\partial}
\newcommand{\B}{\mathbb{B}}
\newcommand{\Sp}{\mathbb{S}}
\newcommand{\scrH}{\mathcal{H}}
\newcommand{\scrL}{\mathcal{L}}
\newcommand{\td}{\widetilde\Delta}

\newcommand{\La}{\langle }
\newcommand{\Ra}{\rangle }
\newcommand{\rk}{\operatorname{rk}}
\newcommand{\card}{\operatorname{card}}
\newcommand{\ran}{\operatorname{Ran}}
\newcommand{\osc}{\operatorname{OSC}}
\newcommand{\im}{\operatorname{Im}}
\newcommand{\re}{\operatorname{Re}}
\newcommand{\tr}{\operatorname{tr}}
\newcommand{\supp}{\operatorname{supp}}
\newcommand{\vf}{\varphi}
\newcommand{\f}[2]{\ensuremath{\frac{#1}{#2}}}

\def\to{\rightarrow}
\def\C{{\mathbb{C}}}
\def\D{{\mathbb{D}}}
\def\Z{{\mathbb{Z}}}
\def\N{{\mathbb{N}}}
\def\R{{\mathbb R}}
\def\T{{\mathbb T}}
\def\H{{\mathbb H}}
\def\L{{\Lambda}}
\def\G{{\Gamma}}

\def\AA{{\mathcal{A}}}
\def\BB{{\mathcal B}}
\def\CC{{\mathcal C}}
\def\DD{{\mathcal{D}}}
\def\EE{{\mathcal E}}
\def\FF{{\mathcal F}}
\def\GG{{\mathcal{G}}}
\def\HH{{\mathcal H}}
\def\JJ{{\mathcal{J}}}
\def\LL{{\mathcal{L}}}
\def\MM{{\mathcal M}}
\def\NN{{\mathcal N}}
\def\PP{{\mathcal{P}}}
\def\QQ{{\mathcal{Q}}}
\def\RR{\mathcal{R}}
\def\SS{{\mathcal{S}}}
\def\TT{{\mathcal{T}}}
\def\UU{{\mathcal{U}}}
\def\XX{{\mathcal{X}}}
\def\YY{{\mathcal{Y}}}
\def\ZZ{{\mathcal{Z}}}

\def\lmn{{\lambda_{mn}}}
\def\gmn{{\gamma_{mn}}}

\def\a{{\alpha}}
\def\e{{\epsilon}}
\def\d{{\delta}}
\def\D{{\Delta}}

\def\o{{\omega}}
\def\sL{{\sigma_{\Lambda}}}
\def\sG{{\sigma_{\Gamma}}}
\def\sump{{{\sum}'}}

\numberwithin{equation}{section}

\newtheorem{thm}{Theorem}[section] \newtheorem{lm}[thm]{Lemma} %
\newtheorem{cor}[thm]{Corollary} \newtheorem{conj}[thm]{Conjecture} %
\newtheorem{prob}[thm]{Problem} \newtheorem{prop}[thm]{Proposition} %
\newtheorem*{prop*}{Proposition}

\newtheorem{define}[thm]{Definition}

\theoremstyle{remark} \newtheorem{rem}[thm]{Remark} %
\newtheorem*{rem*}{Remark} \newtheorem{quest}[thm]{Question}

\section{Introduction and Statement of Main Results}

For a given sequence $\G=\left\{\gmn\right\}$ in $\C$ and a window function $g\in L^2(\R)$ a Gabor system $\GG(\G,g)$ is defined as
$$\GG(\G,g)=\left\{e^{i\textnormal{Re}{\gmn}t}g(t-\textnormal{Im}{\gmn}):\gmn\in\G\right\}.$$ 
Gabor introduced these systems in 1946~\cite{Gab}  for applications in signal processing, and they have been widely used since then. However, one of the most fundamental questions concerning Gabor systems which asks for the description of all time-frequency sequences $\G$ such that $\GG(\G,g)$ is complete in $L^2(\R)$ remained widely open. A complete description is known only in the case of a Gaussian window $g(t)=e^{-\pi t^2}$ and $\G$ being a lattice. In this classical case, it was von~Neumann~\cite{vonNeu} who first observed (without proof) that the system $\GG(\G,g)$ is complete when $\G$ is the integer lattice. Latter many different proofs were given~\cite{BGZ, BBGK, Per, SW} treating also the case of $\G$ being an arbitrary lattice. However, the case  of irregular systems ($\G$ is not a lattice) remains a complete mystery to date, even in this classical case. It is worth mentioning that more recently a complete description of irregular Gabor frames was given by Seip and Wallsten~\cite{SW,Seip}. 

What makes the case of a Gaussian window $g(t)=e^{-\pi t^2}$ simpler to handle, is the fact that in its treatment one can use complex analysis.
Namely, the Bargmann transform $$ Bf(z)=\sqrt[4]{2}\int f(t)e^{2\pi it z-\pi t^2-\frac{\pi}{2}z^2}dt$$ defines a unitary map between $L^2(\R)$ and the  
Bargmann-Fock space $\FF$  consisting of all entire functions such that 
$$\int_{\C}|F(z)|^2e^{-\pi|z|^2}dA(z)<\infty.$$ Therefore, most of the questions concerning the basis properties of Gabor systems can be translated into questions about the Bargmann-Fock space $\FF$ and can be treated with the tools from complex analysis. In particular, the completeness problem for irregular Gabor systems is equivalent to the uniqueness set problem in the Bargmann-Fock space $\FF$. Recall that  a sequence $\G=\{\gmn\}$ in $\C$ is said to be a uniqueness sequence for $\FF$ if every function $F(z)\in\FF$ that vanishes identically on $\G$ must be identically zero. As described in~\cite{Per}, classical results from the theory of entire functions can be used in many cases to check whether a sequence is a uniqueness set. Sequences that were not treated in~\cite{Per} are the ones satisfying the following three properties:

\begin{itemize}

\item[(a)]  $\sum \frac{1}{|\gmn|^{2+\e}}$ converges for $\e>0$ and diverges for $\e\leq 0$
\item[(b)] The classical upper density $D^{+}(\G)$ of $\G$ satisfies $0<D^{+}(\G)<\infty$
\item[(c)] $\abs{\sum_{|\gmn|<R}\frac{1}{\gmn^2}}$ is bounded as a function of $R$.

\end{itemize}
In this paper we consider a class of sequences which satisfy these three properties and characterize among them the uniqueness sequences in $\FF$. The hope is that similar sequences can be used to define the right density notion which will give a complete description of the uniqueness sets in $\FF$. 

\subsection{Main Results}

Denote by $\L:=\{\lmn\}$ the integer lattice, i.e., $\lmn=m+in, \hspace{0.2cm} m,n\in\Z.$  We introduce a more general class of sequences that we are interested in studying

\begin{define}
A sequence $\G = \{\gmn\}$ of complex numbers is $d-$regular if there is a constant $ c >0$ and a function $\phi: [1,\infty) \to \R$ with $\frac{\phi(t)}{t}$ non-increasing and satisfying $\int_1^{\infty}\frac{\phi(t)}{t^2}dt<\infty,$ such that
\begin{itemize}
\item[(a)] $\abs{\lambda_{mn}-d\gamma_{mn}} \leq \phi( \abs{\lambda_{mn}})$
\item[(b)] $\abs{\gamma_{mn}-\gamma_{m'n'}} \geq c \abs{\lambda_{mn}-\lambda_{m'n'}}$.
\end{itemize}
\end{define}

Our main result is the following new result providing information about the uniqueness sets for the Fock space.

\begin{thm}\label{mainfornow} Let $\G$ be a $d-$regular sequence.
\begin{itemize}
\item[(a)] If $d<1$ then $\G$ is not a uniqueness set for the Bargmann-Fock space $\FF$.
\item[(b)] If $d>1$ then $\G$ is a uniqueness set for the Bargmann-Fock space $\FF$.
\end{itemize}  
\end{thm}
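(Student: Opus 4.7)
The plan is to prove the two parts by different methods: part (b) will follow from a density argument based on Jensen's formula and the growth bound furnished by the reproducing kernel of $\FF$, while part (a) will require the explicit construction of a canonical product vanishing on $\G$.

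For part (b), I assume towards contradiction that a nonzero $F\in\FF$ vanishes on $\G$. The reproducing kernel of $\FF$ is $K(z,w)=e^{\pi z\overline{w}}$, so the standard pointwise estimate gives $|F(z)|\leq \|F\|_{\FF}\,e^{\pi|z|^2/2}$. After dividing out any zero at the origin and applying Jensen's formula, one obtains
$$\int_0^R \frac{n_F(r)}{r}\,dr\leq \frac{\pi R^2}{2}+O(\log R),$$
where $n_F(r)$ counts zeros of $F$ in $\{|z|\leq r\}$. The $d$-regularity conditions (with $\phi(t)/t\to 0$, forced by the Dini condition $\int_1^\infty\phi(t)/t^2\,dt<\infty$ combined with monotonicity of $\phi(t)/t$) yield $\gmn=\lmn/d+o(|\lmn|)$, and hence
$$n_\G(R)=\pi d^2R^2(1+o(1))\quad\text{as }R\to\infty.$$
Using $n_F\geq n_\G$, Jensen's inequality becomes $\pi d^2R^2/2\leq \pi R^2/2+O(\log R)$ for large $R$, which forces $d^2\leq 1$ and contradicts $d>1$.

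For part (a), I plan to construct the canonical product
$$\sigma_\G(z):=\prod_{\gamma\in\G}E_2(z/\gamma),\qquad E_2(w)=(1-w)\,e^{w+w^2/2}.$$
Convergence on compact sets is ensured by property (a) in the introduction (which gives $\sum_\gamma|\gamma|^{-3}<\infty$), so $\sigma_\G$ is entire and vanishes exactly on $\G$. The central step is then the growth bound
$$\log|\sigma_\G(z)|\leq \tfrac{\pi d^2}{2}|z|^2+o(|z|^2),$$
because then $|\sigma_\G(z)|^2e^{-\pi|z|^2}\leq e^{-\pi(1-d^2)|z|^2+o(|z|^2)}$ is integrable over $\C$ when $d<1$, placing $\sigma_\G$ in $\FF$.

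The main obstacle is this growth estimate. My plan is to compare $\sigma_\G$ with the rescaled Weierstrass lattice sigma function $\sigma_\L(dz)$, whose logarithmic growth is known classically to equal $\pi d^2|z|^2/2+O(1)$. Writing
$$\log|\sigma_\G(z)|-\log|\sigma_\L(dz)|=\sum_{(m,n)\neq(0,0)}\bigl[\log|E_2(z/\gmn)|-\log|E_2(dz/\lmn)|\bigr],$$
I would estimate each summand by a first-order Taylor expansion in the position variable, using $|\gmn-\lmn/d|\leq \phi(|\lmn|)/d$ from condition (a) of $d$-regularity. The bulk of the sum (indices with $|\lmn|$ not close to $d|z|$) should be controlled by the integrability $\int_1^\infty\phi(t)/t^2\,dt<\infty$ and should contribute a discrepancy of $o(|z|^2)$. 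The transition regime $|\lmn|\approx d|z|$ requires the separation hypothesis (b) of $d$-regularity to keep $z/\gmn$ away from the singularity of $\log|1-w|$ at $w=1$; this is where I expect the most delicate accounting to occur.
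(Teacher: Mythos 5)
Your part (b) is correct and takes a genuinely different, and more elementary, route than the paper. The paper factors $f=\sigma_{\Gamma}g$ and relies on the hard lower bound $|\sigma_{\Gamma}(z)|\geq C\,\textnormal{dist}(z,\Gamma)e^{\frac{\pi d^2}{2}|z|^2-o(|z|^2)}$ from Lemma~\ref{mainlemma} together with a subharmonicity argument; you use only the counting asymptotics $n_{\Gamma}(R)=\pi d^2R^2(1+o(1))$ (which does follow, since $|\lambda_{mn}-d\gamma_{mn}|\leq\phi(|\lambda_{mn}|)=o(|\lambda_{mn}|)$ and the points of $\Gamma$ are distinct by the separation hypothesis) plus Jensen's formula against the kernel bound $|F(z)|\leq\|F\|e^{\pi|z|^2/2}$. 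This bypasses the entire lower-bound half of the paper's main lemma; what it gives up is only the quantitative pointwise information that lemma provides, which is not needed for the statement. The one point to make explicit is the interchange of $o(1)$ with the integral, i.e.\ that $n_{\Gamma}(r)\geq\pi d^2r^2(1-\varepsilon(r))$ with $\varepsilon(r)\to0$ still yields $\int_0^R n_{\Gamma}(r)r^{-1}dr\geq\frac{\pi d^2}{2}R^2-o(R^2)$; this is routine.

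Part (a) has a genuine gap. You take the standard genus-two product with factors $E_2(z/\gamma_{mn})=(1-z/\gamma_{mn})\exp\bigl(z/\gamma_{mn}+z^2/(2\gamma_{mn}^2)\bigr)$, whereas the paper deliberately uses $\exp\bigl(z/\gamma_{mn}+\frac{d^2z^2}{2\lambda_{mn}^2}\bigr)$, precisely so that the quadratic exponents cancel exactly against those of $\sigma_{\Lambda}(dz)$ in the ratio $h(z)$. With your choice the comparison leaves the extra term $\textnormal{Re}\bigl(\frac{z^2}{2}\sum\bigl(\frac{1}{\gamma_{mn}^2}-\frac{d^2}{\lambda_{mn}^2}\bigr)\bigr)$. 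That series converges absolutely (each term is $O\bigl(\frac{1}{|\lambda_{mn}|}\bigl|\frac{1}{\gamma_{mn}}-\frac{d}{\lambda_{mn}}\bigr|\bigr)$, summable by Lemma~\ref{dreg}(b)), but its sum $S$ need not vanish nor be small compared with $\pi(1-d^2)$; one can arrange all perturbations $\gamma_{mn}-\lambda_{mn}/d$ to point so that every term has positive real part, making $|S|$ as large as a fixed multiple of $\int_1^\infty\phi(t)t^{-2}dt$. In the directions where $\textnormal{Re}(Sz^2)>0$ your claimed bound $\log|\sigma_{\Gamma}(z)|\leq\frac{\pi d^2}{2}|z|^2+o(|z|^2)$ then fails by a genuine $O(|z|^2)$, and the product need not lie in $\FF$. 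The repair is cheap --- multiply your product by $e^{-Sz^2/2}$, or adopt the paper's modified factors from the outset --- but as written the concluding step fails. (Two smaller points: a point of $\Gamma$ at the origin must be split off as the paper does with the factor $(z-\gamma_{00})$, and the transition regime $|\lambda_{mn}|\approx d|z|$ that you defer is not a footnote: it is the $h_2$ estimate occupying most of the paper's Lemma~\ref{mainlemma}, where both the separation constant $c$ and the refined bound $\phi(t)=o(t/\ln t)$ of Lemma~\ref{phi_est} are genuinely used.)
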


\begin{rem} The case $d=1$ is inconclusive. Namely, the integer lattice $\L$ is a 1-regular sequence which is a uniqueness set. Removing two points from $\L$ gives a 1-regular sequence which is \emph{not} a uniqueness set. 
\end{rem}

Throughout the paper, the constants can change from line to line, and depends on the appropriate parameters in the estimates in question.

\section{Important Lemmas}

In the following lemma we give some properties of $d$-regular sequences which will be useful in the estimates that follow.
\begin{lm}\label{dreg} If $\G = \{\gmn\}$ is $d$-regular, then
\begin{itemize}
\item[(a)]  $ \frac{1}{\kappa} \leq \frac{\abs{\lambda_{mn}}}{d\abs{\gamma_{mn}}}\leq \kappa$ for some $\kappa> 1,$
\item[(b)] $\sideset{}{'}{\sum}_{m,n}\frac{1}{\abs{\lambda_{mn}}}\abs{\frac{d}{\lambda_{mn}}-\frac{1}{\gamma_{mn}}}<\infty,$
\item[(c)]  $\sideset{}{'}{\sum}_{\abs{\lmn}\leq |z|}\abs{\frac{d}{\lmn}-\frac{1}{\gmn}}=o(\abs{z}).$
\end{itemize}
Here $\sideset{}{'}{\sum}$ denotes the sum taken over the set in question, but with $\lambda_{mn}\neq 0$.
\end{lm}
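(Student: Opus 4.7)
The plan is to extract from the hypotheses on $\phi$ the single key fact that $\phi(t)/t \to 0$ as $t \to \infty$, and then derive all three conclusions from the identity
\[
\frac{d}{\lmn} - \frac{1}{\gmn} = \frac{d\gmn - \lmn}{\lmn \gmn},
\]
combined with dyadic summation over the integer lattice $\L$. For the decay of $\phi(t)/t$: the monotonicity of $\phi(t)/t$ ensures the limit exists and equals some $L \geq 0$; any $L > 0$ would force $\int_1^\infty \phi(t)/t^2\,dt \geq L\int_1^\infty dt/t = \infty$, contradicting the integrability assumption.

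For part (a), I would fix $T$ so large that $\phi(t)/t \leq 1/2$ for $t \geq T$. Then $|\lmn - d\gmn| \leq |\lmn|/2$ yields $|d\gmn| \in [|\lmn|/2, 3|\lmn|/2]$, so the ratio lies in $[2/3,2]$. Only finitely many $\lmn \neq 0$ satisfy $|\lmn| < T$; for these one uses condition (b) of the definition, which forces $\gmn \neq \gamma_{m'n'}$ whenever $\lmn \neq \lambda_{m'n'}$, together with the asymptotic size of $\gamma_{m'n'}$ for large $|\lambda_{m'n'}|$, to conclude $\gmn \neq 0$; enlarging $\kappa$ then absorbs the finitely many exceptional ratios. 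For part (b), the displayed identity, the bound $|d\gmn - \lmn| \leq \phi(|\lmn|)$, and the lower bound $|\gmn| \geq |\lmn|/(d\kappa)$ from (a) give
\[
\frac{1}{|\lmn|}\left|\frac{d}{\lmn} - \frac{1}{\gmn}\right| \leq \frac{d\kappa\,\phi(|\lmn|)}{|\lmn|^3}.
\]
Partitioning $\L$ into dyadic annuli $\{2^k \leq |\lmn| < 2^{k+1}\}$, each containing $\lesssim 4^k$ points, and using monotonicity of $\phi(t)/t$ to write $\phi(|\lmn|) \leq 2\phi(2^k)$ on the $k$-th annulus, the total sum is dominated by $\sum_k \phi(2^k)/2^k$, which is the dyadic-series version of $\int_1^\infty \phi(t)/t^2\,dt < \infty$.

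Part (c) is the delicate one and I expect it to be the main obstacle. The analogous per-point bound is $|d/\lmn - 1/\gmn| \lesssim \phi(|\lmn|)/|\lmn|^2$, and summing over dyadic annuli up to $|\lmn| \leq |z|$ produces a total controlled by $\sum_{k \leq \log_2 |z|} \phi(2^k)$. Bare integrability of $\phi(t)/t^2$ does \emph{not} make this $o(|z|)$; one must use the sharper pointwise decay $\phi(t)/t \to 0$ established at the outset. Given $\epsilon > 0$, choose $K_0$ with $\phi(2^{K_0})/2^{K_0} < \epsilon$; then monotonicity of $\phi(t)/t$ gives $\phi(2^k) \leq \epsilon\cdot 2^k$ for all $k \geq K_0$, so
\[
\sum_{K_0 \leq k \leq \log_2 |z|} \phi(2^k) \leq \epsilon \sum_{k \leq \log_2 |z|} 2^k \leq 2\epsilon |z|.
\]
The finite head $\sum_{k < K_0} \phi(2^k)$ is bounded independently of $|z|$, so since $\epsilon$ is arbitrary the whole sum is $o(|z|)$. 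This monotonicity-plus-integrability upgrade to the pointwise decay of $\phi(t)/t$ is the one substantive idea; everything else is the algebraic rewriting of $d/\lmn - 1/\gmn$ together with dyadic counting on the lattice.
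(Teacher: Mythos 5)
Your proposal is correct. Parts (a) and (b) match the paper's argument: the paper also reduces (b) to $\sum'\phi(\abs{\lmn})/\abs{\lmn}^3$ via the identity $\frac{d}{\lmn}-\frac{1}{\gmn}=\frac{d\gmn-\lmn}{\lmn\gmn}$ together with the comparability from (a), and then compares with $\int_1^\infty \phi(t)t^{-2}\,dt$ (your dyadic annuli are just an explicit form of that comparison). Where you genuinely diverge is part (c): the paper derives (c) as a formal consequence of (b) alone, by splitting the sum at a radius $\abs{w}$ beyond which the tail of the convergent series in (b) is less than $\e$, and observing that on the range $\abs{w}\leq\abs{\lmn}\leq\abs{z}$ one may insert the factor $\abs{z}/\abs{\lmn}\geq 1$ to get $\sum\abs{\frac{d}{\lmn}-\frac{1}{\gmn}}\leq\e\abs{z}$, with the head contributing $O(1)$. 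You instead go back to $\phi$, first upgrading integrability plus monotonicity of $\phi(t)/t$ to the pointwise decay $\phi(t)/t\to 0$ and then summing $\phi(2^k)$ over dyadic scales; this is sound (your proof that the limit $L$ must be zero is exactly right), and your auxiliary fact is a weak form of the paper's separate Lemma~\ref{phi_est}, which establishes the stronger $\phi(t)=o(t/\ln t)$ for later use. The paper's route for (c) is shorter and avoids any further appeal to $\phi$, while yours is self-contained at the level of the hypotheses; both are valid. One small caveat common to both arguments: part (a) tacitly requires $\gmn\neq 0$ whenever $\lmn\neq 0$ (otherwise the upper bound $\abs{\lmn}/(d\abs{\gmn})\leq\kappa$ is vacuousl violated); the paper assumes this implicitly throughout (e.g.\ in defining $\sigma_\Gamma$), and your attempt to deduce it from condition (b) of the definition is not really conclusive, but this is a hypothesis-level issue rather than a gap in your argument relative to the paper's.
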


\begin{proof}
Part (a) follows from the definition of $d$-regular sequence which in fact implies that $\lim_{m,n \to \infty} \frac{\lmn}{\gmn} =d$. 

For part (b) first notice that our condition on $\phi(t)$ says that $$\int_{1}^{\infty}\frac{\phi(t)}{t^2}dt<\infty.$$
Using this we obtain
$$\sideset{}{'}{\sum}_{m,n}\frac{1}{\abs{\lambda_{mn}}}\abs{\frac{d}{\lambda_{mn}}-\frac{1}{\gamma_{mn}}}\leq \kappa d \sideset{}{'}{\sum}_{m,n}\frac{\abs{\lambda_{mn}-d\gamma_{mn}}}{\abs{\lambda_{mn}}^3}\leq \kappa d \sideset{}{'}{\sum}_{m,n}\frac{\phi( \abs{\lambda_{mn}})}{\abs{\lambda_{mn}}^3}\leq O(1)\int_{1}^{\infty}\frac{\phi(t)}{t^2}dt<\infty.$$

Finally, we show that (c) is a consequence of (b).
Let $\e>0$. There exists $w\in\C$ such that $$\sum_{|\lmn|>|w|}\frac{1}{|\lmn|}\abs{\frac{d}{\lmn}-\frac{1}{\gmn}}<\e.$$ Therefore, $$\sum_{|w|\leq |\lmn|\leq |z|}\abs{\frac{d}{\lmn}-\frac{1}{\gmn}}<\sum_{|w|\leq |\lmn|\leq |z|}\frac{|z|}{|\lmn|}\abs{\frac{d}{\lmn}-\frac{1}{\gmn}}\leq \e|z|.$$ Now, 

$$\frac{1}{|z|}\sideset{}{'}{\sum}_{|\lmn|\leq |z|}\abs{\frac{d}{\lmn}-\frac{1}{\gmn}}=\frac{1}{|z|}\sideset{}{'}{\sum}_{|\lmn|< |w|}\abs{\frac{d}{\lmn}-\frac{1}{\gmn}}+\frac{1}{|z|}\sum_{{|w|\leq |\lmn|\leq |z|}}\abs{\frac{d}{\lmn}-\frac{1}{\gmn}}.$$ By fixing $w$ and letting $|z|\to\infty$ we obtain $$\lim_{|z|\to\infty}\frac{1}{|z|}\sideset{}{'}{\sum}_{|\lmn|\leq |z|}\abs{\frac{d}{\lmn}-\frac{1}{\gmn}}\leq \e.$$ Since $\e$ was arbitrary we are done. 
\end{proof}

\begin{lm}\label{phi_est} If $\frac{\phi(t)}{t}$ is a non-increasing positive function such that  $\int_1^{\infty}\frac{\phi(t)}{t^2}dt<\infty$ then $\phi(t)=o\left(\frac{t}{\ln{t}}\right)$ as $t\to\infty$.
\end{lm}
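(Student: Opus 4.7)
The plan is to set $\psi(t) := \phi(t)/t$, so that by hypothesis $\psi$ is a non-increasing positive function on $[1,\infty)$ and the integrability condition $\int_1^\infty \phi(t)/t^2\, dt < \infty$ becomes $\int_1^\infty \psi(t)/t\, dt < \infty$. Restated in these terms, the conclusion $\phi(t) = o(t/\ln t)$ is exactly $\psi(t)\ln t \to 0$ as $t \to \infty$, so I need to extract this pointwise decay rate from the mere convergence of the integral of $\psi(t)/t$.

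The key step is a monotonicity trick. For any $T > 1$, since $\psi$ is non-increasing we have $\psi(t) \geq \psi(T^2)$ for every $t \in [T, T^2]$, and therefore
\begin{equation*}
\int_T^{T^2} \frac{\psi(t)}{t}\, dt \;\geq\; \psi(T^2) \int_T^{T^2} \frac{dt}{t} \;=\; \psi(T^2)\, \ln T.
\end{equation*}
Because $\int_1^\infty \psi(t)/t\, dt$ converges, the left-hand side is a Cauchy tail and tends to $0$ as $T \to \infty$. Hence $\psi(T^2)\ln T \to 0$. Writing $s = T^2$, so that $\ln T = \tfrac{1}{2}\ln s$, yields $\psi(s)\ln s \to 0$, which is precisely $\phi(s) = o(s/\ln s)$.

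There is no real obstacle here; the proof is essentially a one-liner once one recognizes that monotonicity converts convergence of $\int \psi(t)/t\, dt$ into the correct pointwise rate. The only thing to be slightly careful about is the choice of interval over which to bound $\psi$ from below: a naive choice like $[T/2, T]$ only gives a constant lower bound for $\int dt/t$, whereas stretching to $[T, T^2]$ produces the logarithmic factor that matches the target rate $t/\ln t$.
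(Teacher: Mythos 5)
Your proof is correct and rests on the same mechanism as the paper's: the monotonicity of $\phi(t)/t$ lets you bound a tail of the convergent integral from below by $\frac{\phi(s)}{s}$ times the logarithmic length of an interval, and that length is chosen comparable to $\ln s$. The only difference is cosmetic --- the paper argues by contradiction along a sequence $b_k$ using the intervals $[b_k^{1-\delta},b_k]$, whereas you run the same estimate directly on $[T,T^2]$, which is arguably cleaner.
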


\begin{proof} If $\phi(t)\neq o\left(\frac{t}{\ln{t}}\right)$ then there exists $\e>0$ and a sequence $(b_k)$ (with terms $>1$) such that $\phi(b_k)\ln{b_k}>\e b_k$. Moreover, for a fixed small $\d>0$ we can choose this sequence so that $b_k<(b_{k+1})^{1-\d}$. Denote $a_k=(b_{k})^{1-\d}$. We obtain the following contradiction 
$$ \int_1^{\infty}\frac{\phi(t)}{t^2}dt\geq \sum_k \int_{a_k}^{b_k}\frac{\phi(t)}{t^2}dt \geq \sum_k\frac{\phi(b_k)}{b_k}\int_{a_k}^{b_k}\frac{dt}{t}>\d \sum_k \e=\infty.$$

\end{proof}

Let $\sL(z)$ be the usual Weierstrass $\sigma-$function (from the theory of elliptic functions), defined by  $$\sigma_{\L}(z)=z{\prod_{m,n}}'\left(1-\frac{z}{\lmn}\right)\exp\left(\frac{z}{\lmn}+\frac{z^2}{2\lmn}\right).$$
The fundamental property that $\sigma_{\L}(z)$ possesses is the quasi-periodicity: 
$$
\sL \left(z+\lmn\right)=-\sL (z)\exp\left(\left(m\eta_1+n\eta_2\right)\left(z+\frac{\lmn}{2}\right)\right),
$$ where $\eta_1=2\frac{\sL'(1/2)}{\sL(1/2)}$, $\eta_2=2\frac{\sL'(i/2)}{\sL(i/2)}$. An immediate consequence of the quasi-periodicity of $\sL(z)$ is the following inequality:
\begin{equation}
\label{Weier-Est} 
C_1 \textnormal{dist}(z,\L)\leq |\sL(z)e^{-\frac{\pi}{2}|z|^2}|\leq C_2 .
\end{equation}  

To each $d-$regular sequence $\G:=\{\gmn\}$ we associate the entire function $\sG(z)$ by 
$$
\sigma_{\Gamma}(z):=\left(z-\gamma_{00}\right)\sideset{}{'}{\prod}_{m,n}\left(1-\frac{z}{\gamma_{mn}}\right)\exp\left(\frac{z}{\gamma_{mn}}+\frac{1}{2}\frac{d^2 z^2}{\lambda_{mn}^2}\right).
$$
In the case when $\G=\frac{1}{d}\L$ is the scaled integer lattice $\sG(z)$ is just the Weierstrass sigma-function $\sL(dz)$. Since $\sG(z)$ is not the usual Weierstrass product, its convergence must be verified.

\begin{lm}
The function $\sigma_{\Gamma}(z)$ is a well defined entire function.
\end{lm}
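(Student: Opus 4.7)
The plan is to check that the infinite product converges uniformly on compact subsets of $\C$, which will place $\sigma_\Gamma$ in the class of entire functions. Fix $R>0$ and restrict to $|z|\leq R$. The finitely many factors with $|\gamma_{mn}|\leq 2R$ are each entire, so it suffices to show that the sub-product over $|\gamma_{mn}|>2R$ converges. For these factors I would take the principal branch of the logarithm and expand
$$\log\!\left(1-\frac{z}{\gamma_{mn}}\right)+\frac{z}{\gamma_{mn}}+\frac{1}{2}\frac{d^2 z^2}{\lambda_{mn}^2}=\frac{z^2}{2}\left(\frac{d^2}{\lambda_{mn}^2}-\frac{1}{\gamma_{mn}^2}\right)-\sum_{k\geq 3}\frac{z^k}{k\,\gamma_{mn}^k}.$$

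The tail is easy to control: since $|z|/|\gamma_{mn}|\leq 1/2$, a geometric bound gives $|\sum_{k\geq 3}\frac{z^k}{k\gamma_{mn}^k}|\lesssim R^3/|\gamma_{mn}|^3$, and Lemma~\ref{dreg}(a) implies $|\gamma_{mn}|\asymp |\lambda_{mn}|$, so this tail is summable thanks to the classical estimate $\sideset{}{'}{\sum}_{m,n}|\lambda_{mn}|^{-3}<\infty$. This mirrors the standard convergence argument for the Weierstrass sigma-function.

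The main obstacle — and the reason the product is not a standard Weierstrass product — is the quadratic term: if we had chosen $1/\gamma_{mn}^2$ instead of $d^2/\lambda_{mn}^2$ inside the exponential it would vanish, but as stated we must control the difference $\frac{d^2}{\lambda_{mn}^2}-\frac{1}{\gamma_{mn}^2}$. The key step is the algebraic factorization
$$\frac{d^2}{\lambda_{mn}^2}-\frac{1}{\gamma_{mn}^2}=\left(\frac{d}{\lambda_{mn}}-\frac{1}{\gamma_{mn}}\right)\left(\frac{d}{\lambda_{mn}}+\frac{1}{\gamma_{mn}}\right),$$
combined with Lemma~\ref{dreg}(a), which bounds $\bigl|\frac{d}{\lambda_{mn}}+\frac{1}{\gamma_{mn}}\bigr|\lesssim 1/|\lambda_{mn}|$. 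Therefore the quadratic coefficient is majorized by $\frac{1}{|\lambda_{mn}|}\bigl|\frac{d}{\lambda_{mn}}-\frac{1}{\gamma_{mn}}\bigr|$, whose sum is finite by Lemma~\ref{dreg}(b).

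Putting the two estimates together, the series $\sum_{|\gamma_{mn}|>2R}\log(\cdots)$ converges absolutely and uniformly on $|z|\leq R$; exponentiating gives uniform convergence of the product there. Multiplying by the finite prefix of factors with $|\gamma_{mn}|\leq 2R$ and the lead factor $(z-\gamma_{00})$ produces an entire function on $|z|\leq R$. Since $R$ was arbitrary, $\sigma_\Gamma(z)$ is entire, completing the proof.
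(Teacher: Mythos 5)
Your proof is correct and follows essentially the same route as the paper: expand the logarithm of each factor for $|\gamma_{mn}|>2R$, control the $k\geq 3$ tail geometrically using $\sideset{}{'}{\sum}|\lambda_{mn}|^{-3}<\infty$, and control the quadratic term by reducing $\frac{d^2}{\lambda_{mn}^2}-\frac{1}{\gamma_{mn}^2}$ to the difference $\frac{d}{\lambda_{mn}}-\frac{1}{\gamma_{mn}}$ and invoking Lemma~\ref{dreg}(b). The only cosmetic difference is that you use the difference-of-squares factorization with the bound $\bigl|\frac{d}{\lambda_{mn}}+\frac{1}{\gamma_{mn}}\bigr|=O(1/|\lambda_{mn}|)$, whereas the paper writes the same quantity as $\bigl(\frac{d}{\lambda_{mn}}-\frac{1}{\gamma_{mn}}\bigr)^2+\frac{2}{\gamma_{mn}}\bigl(\frac{d}{\lambda_{mn}}-\frac{1}{\gamma_{mn}}\bigr)$; both land on the same convergent majorant.
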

\begin{proof}
Fix $R>0$. For $|z|\leq R$ and $|\gmn|>2R$ we have
\begin{eqnarray*}\abs{\log\abs{1-\frac{z}{\gmn}}+\textnormal{Re}\,\frac{z}{\gmn}+\textnormal{Re}\,\frac{(dz)^2}{2\lmn^2}}&=& \abs{-\textnormal{Re}\sum_{k\geq 2}\frac{z^k}{k\gmn^k}+\textnormal{Re}\,\frac{(dz)^2}{2\lmn^2}} \\
&\leq&\frac{|z|^2}{2}\abs{\frac{d}{\lmn}-\frac{1}{\gmn}}^2+\frac{|z|^2}{\abs{\gmn}}\abs{\frac{d}{\lmn}-\frac{1}{\gmn}}+\sum_{k\geq 3}\frac{|z|^k}{k|\gmn|^k} \\
&\leq&\frac{|z|^2}{2}\abs{\frac{d}{\lmn}-\frac{1}{\gmn}}^2+\frac{|z|}{2}\abs{\frac{d}{\lmn}-\frac{1}{\gmn}}+\sum_{k\geq 3}\frac{|z|^k}{k|\gmn|^k} \\
&\leq& \frac{|z|^2}{2}\abs{\frac{d}{\lmn}-\frac{1}{\gmn}}^2+\frac{d|z|}{2|\lmn|}\abs{\frac{d}{\lmn}-\frac{1}{\gmn}}+\frac{|z|^3}{|\gmn|^3}O(1).\end{eqnarray*}
 Now, summation over all $|\gmn|>2R$ gives easily an upper bound $O(R^3)$ which is sufficient to deduce the uniform  convergence on compact sets. So, $\sG(z)$ is a well defined entire function.
\end{proof}

Our goal in the next lemma is to obtain analogous estimates as in \eqref{Weier-Est} for the function $\sigma_{\Gamma}(z)$.  This lemma is analogous to a related fact appearing in \cite{SW}.

\begin{lm}\label{mainlemma}
Let $\Gamma$ be a $d$-regular sequence.  Then there exist constants $C_1$ and $C_2$ such that
\begin{equation}
\label{Seip-Est}
C_1 \textnormal{dist}(z,\Gamma) e^{-o(\abs{z}^2)}\leq\abs{\sigma_{\Gamma}(z) e^{-\frac{d^2\pi}{2}\abs{z}^2}}\leq C_2 e^{o(\abs{z}^2)}.
\end{equation}
Here the constants $C_1$ and $C_2$ depend on $c$ and  $\phi(t)$ from the definition of the $d$-regular sequence.
\end{lm}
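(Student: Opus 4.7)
The natural strategy is to compare $\sigma_\Gamma(z)$ with the scaled reference $\sigma_\Lambda(dz)$, to which the Weierstrass estimate \eqref{Weier-Est} applies in the form
$$C_1 \operatorname{dist}(dz,\Lambda) \le \bigl|\sigma_\Lambda(dz) e^{-\frac{d^2\pi}{2}|z|^2}\bigr| \le C_2.$$
Setting $F(z):=\sigma_\Gamma(z)/\sigma_\Lambda(dz)$, my goal is to show $\log|F(z)|=o(|z|^2)$ away from singularities; the asymmetry between $\operatorname{dist}(z,\Gamma)$ in \eqref{Seip-Est} and $\operatorname{dist}(dz,\Lambda)$ coming from the Weierstrass estimate will be resolved by factoring a single nearby zero out of $\sigma_\Gamma$ and matching it with the corresponding zero of $\sigma_\Lambda(dz)$.

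Taking logarithms and using the two product formulas, the comparison becomes
$$\log|F(z)| = \log\left|\frac{z-\gamma_{00}}{dz}\right| + \sideset{}{'}\sum_{m,n} T_{mn}(z),$$
where $T_{mn}(z):=\log\left|\frac{1-z/\gamma_{mn}}{1-dz/\lambda_{mn}}\right|+\operatorname{Re}\left[z\left(\frac{1}{\gamma_{mn}}-\frac{d}{\lambda_{mn}}\right)\right]$. I split this sum at the threshold $|\lambda_{mn}|\sim 2|z|$. In the far regime $|\lambda_{mn}|>2|z|$, Taylor expansion of both logarithms makes the linear terms cancel the $\operatorname{Re}$ correction, leaving a series in $k\ge 2$. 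Each coefficient $\frac{1}{\gamma_{mn}^k}-\frac{d^k}{\lambda_{mn}^k}$ factors as $\bigl(\frac{1}{\gamma_{mn}}-\frac{d}{\lambda_{mn}}\bigr)\cdot O(|\lambda_{mn}|^{-(k-1)})$ by Lemma~\ref{dreg}(a), so after summing the resulting geometric series in $k$ (allowed since $|z|<|\lambda_{mn}|/2$) each lattice point contributes at most $|z|^2|d/\lambda_{mn}-1/\gamma_{mn}|/|\lambda_{mn}|$ up to constants. Summing over $|\lambda_{mn}|>2|z|$ is then a tail of the convergent series from Lemma~\ref{dreg}(b), hence $o(|z|^2)$.

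In the near regime $|\lambda_{mn}|\le 2|z|$, the $\operatorname{Re}$ correction alone contributes
$$\sum_{|\lambda_{mn}|\le 2|z|} |z|\,|d/\lambda_{mn}-1/\gamma_{mn}| = |z|\cdot o(|z|) = o(|z|^2)$$
by Lemma~\ref{dreg}(c). The log ratio I would rewrite as $\log\bigl|1+\tfrac{z(d/\lambda_{mn}-1/\gamma_{mn})}{1-dz/\lambda_{mn}}\bigr|$ and control by the same quantity whenever $z$ stays uniformly away from both $\{\lambda_{mn}/d\}$ and $\Gamma$. When $z$ lies close to some $\gamma_{m_0 n_0}$, I pair that singular term in $\sigma_\Gamma$ with the nearby singular term at $\lambda_{m_0 n_0}/d$ in $\sigma_\Lambda(dz)$; their mismatch $|\gamma_{m_0 n_0}-\lambda_{m_0 n_0}/d|=O(\phi(|\lambda_{m_0 n_0}|)/d)$ is then small enough to convert $\operatorname{dist}(dz,\Lambda)$ into $\operatorname{dist}(z,\Gamma)$ up to a multiplicative $e^{\pm o(|z|^2)}$ factor.

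The main obstacle is precisely this near-zero bookkeeping: the accumulated misalignment of the two zero-sets across the lattice points in $|\lambda_{mn}|\le 2|z|$ must be absorbed into the $e^{\pm o(|z|^2)}$ error, and the logarithmic character of the near-zero factors could in principle pollute the estimate by a large $\log|z|$ factor. This is where Lemma~\ref{phi_est} becomes decisive: the strengthening $\phi(t)=o(t/\ln t)$ (strictly sharper than integrability of $\phi(t)/t^2$) is exactly what is needed to defeat that logarithmic loss. Once the $|F(z)|=e^{o(|z|^2)}$ estimate is in hand, multiplying by \eqref{Weier-Est} applied to $\sigma_\Lambda(dz)$ yields both halves of \eqref{Seip-Est}.
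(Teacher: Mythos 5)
Your proposal follows essentially the same route as the paper's proof: compare $\sigma_\Gamma(z)$ with $\sigma_\Lambda(dz)$, split the logarithm of the ratio at a threshold proportional to $|z|$, cancel the linear terms against the exponential corrections and sum the tail via Lemma~\ref{dreg}(b) in the far regime, and in the near regime use the separation hypothesis together with $\sum_{|\lambda_{mn}|\le R}|\lambda_{mn}|^{-2}=O(\log R)$ and Lemma~\ref{phi_est} to absorb the logarithmic loss, finally pairing the nearest zeros to trade $\textnormal{dist}(dz,\Lambda)$ for $\textnormal{dist}(z,\Gamma)$. The only caveats are bookkeeping points the paper handles explicitly --- the cutoff must be taken at $\kappa^{1+\epsilon}d|z|$ rather than $2|z|$ so that $|z/\gamma_{mn}|<1$ also holds in the far regime, and the near-regime denominators $|1-dz/\lambda_{mn}|$ are only of size $|\lambda_{mn}-\lambda_{MN}|/|\lambda_{mn}|$, which is precisely what produces the $\log|z|$ factor you then defeat with Lemma~\ref{phi_est} --- but the strategy and the decisive ingredients are identical.
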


\begin{proof}
For the proof, we define an auxiliary function
$$
h(z):=\frac{\sigma_{\Gamma}(z)}{\sigma_{\Lambda}(d z)}\frac{\textnormal{dist}(d z,\Lambda)}{\textnormal{dist}(z,\Gamma)}
$$
Note that this gives us the representation
$$
\sigma_{\Gamma}(z)=\sigma_{\Lambda}(d z)\frac{\textnormal{dist}(z,\Gamma)}{\textnormal{dist}(d z,\Lambda)}h(z)
$$
The plan now is to show that
\begin{equation}
\label{goalest}
C_1e^{-o(\abs{z}^2)}\leq\abs{h(z)}\leq C_2\frac{\textnormal{dist}(d z,\Lambda)}{\textnormal{dist}(z,\Gamma)}e^{o(\abs{z}^2)}.
\end{equation}

If we have estimate \eqref{goalest}, then by coupling this with \eqref{Weier-Est} we see that 
\begin{eqnarray*}
\abs{\sigma_\Gamma(z) e^{-\frac{d ^2\pi}{2}\abs{z}^2}} & = & \frac{\textnormal{dist}(z,\Gamma)}{\textnormal{dist}(dz,\Lambda)}\abs{h(z)}\abs{\sigma_{\Lambda}(d z)e^{-\frac{d^2\pi}{2}\abs{z}^2}}\\
& \leq & C_2 \frac{\textnormal{dist}(z,\Gamma)}{\textnormal{dist}(d z,\Lambda)}\frac{\textnormal{dist}(d z,\Lambda)}{\textnormal{dist}(z,\Gamma)} e^{o(\abs{z}^2)}\abs{\sigma_{\Lambda}(d z)e^{-\frac{d ^2\pi}{2}\abs{z}^2}}\\
& = & C_2 e^{o(\abs{z}^2)}\abs{\sigma_{\Lambda}(d z)e^{-\frac{d ^2\pi}{2}\abs{z}^2}}\\
& \leq & C_2' e^{o(\abs{z}^2)}.
\end{eqnarray*}
Where in the estimates above, we first used the upper estimate in \eqref{goalest} and the upper estimate in \eqref{Weier-Est}.  This estimate then gives us that the upper estimate in \eqref{Seip-Est} holds.

Similarly, first using the lower estimate in \eqref{goalest}, and then the lower estimate in \eqref{Weier-Est} yields the following
\begin{eqnarray*}
\abs{\sigma_\Gamma(z)e^{-\frac{d ^2\pi}{2}\abs{z}^2}} & = & \frac{\textnormal{dist}(z,\Gamma)}{\textnormal{dist}(d z,\Lambda)}\abs{h(z)}\abs{\sigma_{\Lambda}(d z)e^{-\frac{d ^2\pi}{2}\abs{z}^2}}\\
& \geq & C_1 \frac{\textnormal{dist}(z,\Gamma)}{\textnormal{dist}(d z,\Lambda)}e^{-o(\abs{z}^2)}\abs{\sigma_{\Lambda}(d z)e^{-\frac{d ^2\pi}{2}\abs{z}^2}}\\
& \geq & C_1' e^{-o(\abs{z}^2)}\textnormal{dist}(z,\Gamma)\frac{\textnormal{dist}(d z,\Lambda)}{\textnormal{dist}(d z,\Lambda)}\\
& = & C_1' e^{-o(\abs{z}^2)} \textnormal{dist}(z,\Gamma).
\end{eqnarray*}
which is the lower estimate in \eqref{Seip-Est}.

Working with the definition of $h(z)$ we see that we can write it in the following way:
\begin{eqnarray*}
h(z) & = & \frac{\sigma_{\Gamma}(z)}{\sigma_{\Lambda}(d z)}\frac{\textnormal{dist}(d z,\Lambda)}{\textnormal{dist}(z,\Gamma)}\\
 & = & \frac{\textnormal{dist}(d z,\Lambda)}{\textnormal{dist}(z,\Gamma)}\frac{(z-\gamma_{00})}{d z}\sideset{}{'}{\prod}_{m,n}\frac{\left(1-\frac{z}{\gamma_{mn}}\right)}{\left(1-\frac{d z}{\lambda_{mn}}\right)}\exp\left(\frac{z}{\gamma_{mn}}-\frac{d z}{\lambda_{mn}}\right)\\
 & = & \frac{\textnormal{dist}(d z,\Lambda)}{\textnormal{dist}(z,\Gamma)}\frac{(z-\gamma_{00})}{d z} \exp\left(z\sideset{}{'}{\sum}_{m,n}\left(\frac{1}{\gamma_{mn}}-\frac{d}{\lambda_{mn}}\right)\right) \sideset{}{'}{\prod}_{m,n}\frac{\left(1-\frac{z}{\gamma_{mn}}\right)}{\left(1-\frac{d z}{\lambda_{mn}}\right)}.
\end{eqnarray*}

We now factorize the function $h(z)$ into pieces based on this representation.  Fix some $\epsilon>0$.  Write $h(z)=h_1(z)h_2(z)h_3(z)$ where
\begin{eqnarray*}
h_1(z) & = & \exp\left(z\sideset{}{'}{\sum}_{\abs{\lambda_{mn}}\leq \kappa^{1+\epsilon}d\abs{z}}\left(\frac{1}{\gamma_{mn}}-\frac{d}{\lambda_{mn}}\right)\right)\\
h_2(z) & = & \frac{\textnormal{dist}(d z,\Lambda)}{\textnormal{dist}(z,\Gamma)}\frac{(z-\gamma_{00})}{d z}\sideset{}{'}{\prod}_{\abs{\lambda_{mn}}\leq \kappa^{1+\epsilon}d\abs{z}}\frac{\left(1-\frac{z}{\gamma_{mn}}\right)}{\left(1-\frac{d z}{\lambda_{mn}}\right)}\\
h_3(z) & = & \exp\left(z\sum_{\abs{\lambda_{mn}}>\kappa^{1+\epsilon}d\abs{z}}\left(\frac{1}{\gamma_{mn}}-\frac{d}{\lambda_{mn}}\right)\right)\prod_{\abs{\lambda_{mn}}> \kappa^{1+\epsilon}d\abs{z}}\frac{\left(1-\frac{z}{\gamma_{mn}}\right)}{\left(1-\frac{d z}{\lambda_{mn}}\right)}.
\end{eqnarray*}
We will show that the following estimates hold
\begin{eqnarray}
& e^{-o(\abs{z}^2)}\leq\abs{h_1(z)}\leq e^{o(\abs{z}^2)} & \label{h1est}\\
& C_1e^{-o(\abs{z}^2)}\leq\abs{h_2(z)}\leq C_2\frac{\textnormal{dist}(d z,\Lambda)}{\textnormal{dist}(z,\Gamma)}e^{o(\abs{z}^2)} & \label{h2est}\\
& e^{-o(\abs{z}^2)}\leq\abs{h_3(z)}\leq e^{o(\abs{z}^2)}. & \label{h3est}
\end{eqnarray}
Combining estimates \eqref{h1est}, \eqref{h2est} and \eqref{h3est} we see that \eqref{goalest} holds.

We first consider the function $h_1(z)$.  Note that
\begin{eqnarray*}
\abs{h_1(z)} & = & \exp\left(\textnormal{Re}\left(\sideset{}{'}{\sum}_{\abs{\lambda_{mn}}\leq \kappa^{1+\epsilon}d\abs{z}}\left(\frac{z}{\gamma_{mn}}-\frac{dz}{\lambda_{mn}}\right)\right)\right).
\end{eqnarray*}
Since for any complex number $-\abs{w}\leq \textnormal{Re}\, w\leq\abs{w}$ we have that
\begin{eqnarray*}
-\abs{z}\sideset{}{'}{\sum}_{\abs{\lambda_{mn}}\leq \kappa^{1+\epsilon}d\abs{z}}\abs{\frac{d}{\lambda_{mn}}-\frac{1}{\gamma_{mn}}}\leq \sideset{}{'}{\sum}_{\abs{\lambda_{mn}}\leq \kappa^{1+\epsilon}d\abs{z}} \textnormal{Re}\left(\frac{z}{\gamma_{mn}}-\frac{d z}{\lambda_{mn}}\right)\leq \abs{z}\sideset{}{'}{\sum}_{\abs{\lambda_{mn}}\leq \kappa^{1+\epsilon}d\abs{z}}\abs{\frac{d}{\lambda_{mn}}-\frac{1}{\gamma_{mn}}}.
\end{eqnarray*}
All together this implies 
$$
\exp\left(-\abs{z}\sideset{}{'}{\sum}_{\abs{\lambda_{mn}}\leq \kappa^{1+\epsilon}d\abs{z}}\abs{\frac{d}{\lambda_{mn}}-\frac{1}{\gamma_{mn}}}\right)\leq \abs{h_1(z)}\leq \exp\left(\abs{z}\sideset{}{'}{\sum}_{\abs{\lambda_{mn}}\leq \kappa^{1+\epsilon}d\abs{z}}\abs{\frac{d}{\lambda_{mn}}-\frac{1}{\gamma_{mn}}}\right).
$$
The desired estimate on $h_1(z)$ now follows from part (b) in Lemma~\ref{dreg}.

Next consider term $h_3(z)$.  First, take the absolute value of the expression defining $h_3(z)$ and then the logarithm to find that
$$
\log\abs{h_3(z)}=\sum_{\abs{\lambda_{mn}}>\kappa^{1+\epsilon}d\abs{z}}\textnormal{Re}\left(\frac{z}{\gamma_{mn}}-\frac{d z}{\lambda_{mn}}\right)+\log\abs{1-\frac{z}{\gamma_{mn}}}-\log\abs{1-\frac{d z}{\lambda_{mn}}}.
$$
We now will expand the logarithms via their power series representation, and collect terms.  Note that we have the power series representation for $\log(1-w)$ given by
$$
-\log(1-w)=\sum_{k=0}^\infty \frac{1}{k+1}w^{k+1}
$$
and so taking the real part of this series we have
$$
\log\abs{1-w}=-\sum_{k=0}^\infty\frac{1}{k+1}\textnormal{Re}\,(w^{k+1}). 
$$
Note that since $\kappa^{1+\epsilon}>1$ and since we are summing over the set $\abs{\lambda_{mn}}>\kappa^{1+\epsilon}d\abs{z}$, we then have that
$$
\frac{d\abs{z}}{\abs{\lambda_{mn}}}\leq\frac{1}{\kappa^{1+\epsilon}}<1.
$$
Similarly, since we suppose that $\abs{\lambda_{mn}}\leq\kappa d\abs{\gamma_{mn}}$ we have that
$$
\frac{\abs{z}}{\abs{\gamma_{mn}}}\leq \frac{d\kappa\abs{z}}{\abs{\lambda_{mn}}}\leq\frac{\kappa}{\kappa^{1+\epsilon}}=\frac{1}{\kappa^{\epsilon}}<1.
$$
Using the power series representation for fixed $\lambda_{mn}$ that satisfies $\abs{\lambda_{mn}}>\kappa^{1+\epsilon}d\abs{z}$ we find that
\begin{equation}
\label{partialh3}
\log\abs{h_3(z)} = \sum_{\abs{\lambda_{mn}}>\kappa^{1+\epsilon}d\abs{z}}\sum_{k=1}^\infty\frac{1}{k+1}\textnormal{Re}\left(\left(\frac{d z}{\lambda_{mn}}\right)^{k+1}-\left(\frac{z}{\gamma_{mn}}\right)^{k+1}\right).
\end{equation}

We continue the estimate of $h_3(z)$ in the following way:
\begin{eqnarray*}
\abs{\log\abs{h_3(z)}} & \leq & \sum_{\abs{\lambda_{mn}}>\kappa^{1+\epsilon}d\abs{z}}\sum_{k=1}^\infty\frac{\abs{z}^{k+1}}{k+1}\abs{\left(\frac{d }{\lambda_{mn}}\right)^{k+1}-\left(\frac{1}{\gamma_{mn}}\right)^{k+1}}\\
& = & \sum_{\abs{\lambda_{mn}}>\kappa^{1+\epsilon}d\abs{z}}\sum_{k=1}^\infty \frac{\abs{z}^{k+1}}{k+1}\abs{ \left( \frac{d }{\lambda_{mn}}- \frac{1}{\gamma_{mn}} \right) \sum_{j=0}^k \left( \frac{d }{\lambda_{mn}}\right)^j \left(  \frac{1}{\gamma_{mn}} \right) ^{k-j} }\\
& = & \sum_{\abs{\lambda_{mn}}>\kappa^{1+\epsilon}d\abs{z}}\abs{z}^2\abs{\frac{d }{\lambda_{mn}}- \frac{1}{\gamma_{mn}}}\left(\sum_{k=1}^\infty\frac{\abs{z}^{k-1}}{k+1}\abs{\sum_{j=0}^k \left( \frac{d }{\lambda_{mn}}\right)^j \left(  \frac{1}{\gamma_{mn}} \right) ^{k-j} }\right).
\end{eqnarray*}
We now focus on the inner term and estimate this directly:
\begin{eqnarray*}
\abs{\sum_{j=0}^k \left( \frac{d }{\lambda_{mn}}\right)^j \left(  \frac{1}{\gamma_{mn}} \right) ^{k-j} } & \leq & \abs{ \sum_{j=0}^k \left( \frac{d }{\lambda_{mn}}\right)^j \left(  \frac{1}{\gamma_{mn}} \right) ^{k-j} -\frac{(k+1)d^k}{\lambda_{mn}^k} } + \frac{(k+1)d^k}{\abs{\lambda_{mn}}^k}
\end{eqnarray*}
To estimate
\begin{equation*}
\abs{ \sum_{j=0}^k \left( \frac{d }{\lambda_{mn}}\right)^j \left(  \frac{1}{\gamma_{mn}} \right) ^{k-j} -\frac{(k+1)d^k}{\lambda_{mn}^k} }
\end{equation*}
we use $a=\frac{d }{\lambda_{mn}}$ , $b=  \frac{1}{\gamma_{mn}}$ and $\abs{b} \leq \kappa \abs{a}$. We have that

\begin{eqnarray*} \abs{a^{k-j}(b^j - a^j) } &=& \abs{a}^{k-j} \abs{b-a} \abs{b^{j-1} + b^{j-2} a + \cdots + a^{j-1}} \\
&\leq& \abs{a}^{k-j} \abs{b-a} (\kappa^{j-1} \abs{a}^{j-1}+ \kappa^{j-2} \abs{a}^{j-1} + \cdots + \abs{a}^{j-1}) \\
&\leq& \abs{a}^{k-1} \abs{b-a} \frac{\kappa^j}{\kappa -1}
\end{eqnarray*}
Using this, we have that
\begin{eqnarray*} \abs{ \sum_{j=0}^k \left(a^j b^{k-j} -(k+1)a^k \right) } &=& \abs{ a^{k-1}(b-a) + a^{k-2}(b^2 - a^2) + \cdots + (b^k-a^k)}\\
&\leq& \sum_{j=0}^k  \abs{a}^{k-1} \abs{b-a} \frac{\kappa^j}{\kappa -1} \\
&\leq& \abs{a}^{k-1} \abs{a-b} \frac{(k+1) \kappa ^k}{\kappa -1}\\
& \leq & \frac{(k+1) \kappa ^k}{\kappa -1}\abs{\frac{d }{\lambda_{mn}}- \frac{1}{\gamma_{mn}}}\frac{d^{k-1}}{\abs{\lambda_{mn}}^{k-1}}
\end{eqnarray*}
where in the end we recall that $a=\frac{d}{\lambda_{mn}}$ and $b=\frac{1}{\gamma_{mn}}$.  Therefore, we have
\begin{eqnarray*}
\abs{\sum_{j=0}^k \left( \frac{d }{\lambda_{mn}}\right)^j \left(  \frac{1}{\gamma_{mn}} \right) ^{k-j} } & \leq & \abs{ \sum_{j=0}^k \left( \frac{d }{\lambda_{mn}}\right)^j \left(  \frac{1}{\gamma_{mn}} \right) ^{k-j} -\frac{(k+1)d^k}{\lambda_{mn}^k} } + \frac{(k+1)d^k}{\abs{\lambda_{mn}}^k}\\
& \leq & \frac{(k+1)d^k}{\abs{\lambda_{mn}}^k}+\frac{(k+1) \kappa ^k}{\kappa -1}\frac{d^{k-1}}{\abs{\lambda_{mn}}^{k-1}}\abs{\frac{d }{\lambda_{mn}}- \frac{1}{\gamma_{mn}}}\\
& = & \frac{(k+1)d^{k-1}}{\abs{\lambda_{mn}}^{k-1}}\left(\frac{1}{\abs{\lambda_{mn}}}+\frac{\kappa^{k}}{\kappa-1}\abs{\frac{d }{\lambda_{mn}}- \frac{1}{\gamma_{mn}}}\right).
\end{eqnarray*}
Thus, we have that
\begin{eqnarray*}
\sum_{k=1}^{\infty}\frac{\abs{z}^{k-1}}{k+1}\abs{\sum_{j=0}^k \left( \frac{d }{\lambda_{mn}}\right)^j \left(  \frac{1}{\gamma_{mn}} \right) ^{k-j} }  & \leq & \sum_{k=1}^{\infty}\left(\frac{d\abs{z}}{\abs{\lambda_{mn}}}\right)^{k-1}\left(\frac{1}{\abs{\lambda_{mn}}}+\frac{\kappa^{k}}{\kappa-1}\abs{\frac{d }{\lambda_{mn}}- \frac{1}{\gamma_{mn}}}\right)\\
& \leq & \sum_{k=1}^{\infty}\left(\frac{1}{\kappa^{1+\epsilon}}\right)^{k-1}\left(\frac{1}{\abs{\lambda_{mn}}}+\frac{\kappa^{k}}{\kappa-1}\abs{\frac{d }{\lambda_{mn}}- \frac{1}{\gamma_{mn}}}\right)\\
& = & C_1(\kappa,\epsilon)\frac{1}{\abs{\lambda_{mn}}}+C_2(\kappa,\epsilon)\abs{\frac{d }{\lambda_{mn}}- \frac{1}{\gamma_{mn}}}.
\end{eqnarray*}
To arrive at this estimate, we used that $\kappa>1$ and that $\abs{\lambda_{mn}}>\kappa^{1+\epsilon}d\abs{z}$ (which appears in the sum we are considering).  Back to our estimates of $h_3(z)$, we find that
\begin{eqnarray*}
\abs{\log\abs{h_3(z)}} & \leq & \sum_{\abs{\lambda_{mn}}>\kappa^{1+\epsilon}d\abs{z}}\abs{z}^2\abs{\frac{d }{\lambda_{mn}}- \frac{1}{\gamma_{mn}}}\left(\sum_{k=1}^\infty\frac{\abs{z}^{k-1}}{k+1}\abs{\sum_{j=0}^k \left( \frac{d }{\lambda_{mn}}\right)^j \left(  \frac{1}{\gamma_{mn}} \right) ^{k-j} }\right)\\
& \leq & \sum_{\abs{\lambda_{mn}}>\kappa^{1+\epsilon}d\abs{z}} \abs{z}^2\abs{\frac{d }{\lambda_{mn}}- \frac{1}{\gamma_{mn}}}\left(C_1(\kappa,\epsilon)\frac{1}{\abs{\lambda_{mn}}}+C_2(\kappa,\epsilon)\abs{\frac{d }{\lambda_{mn}}- \frac{1}{\gamma_{mn}}}\right)\\
&=& \abs{z}^2\left(C_1(\epsilon,\kappa)\sum_{\abs{\lambda_{mn}}>\kappa^{1+\epsilon}d\abs{z}}  \abs{\frac{d }{\lambda_{mn}}- \frac{1}{\gamma_{mn}} }^2 
+ C_2(\epsilon,\kappa) \sum_{\abs{\lambda_{mn}}>\kappa^{1+\epsilon}d\abs{z}} \frac{1}{\abs{\lambda_{mn}}} \abs{\frac{d }{\lambda_{mn}}- \frac{1}{\gamma_{mn}} }\right).
\end{eqnarray*}
The two series above are remainders of convergent series due to part (b) of Lemma~\ref{dreg}. So they both tend to 0, as $\abs{z} \rightarrow \infty$. Therefore, the last expression is $o(\abs{z}^2)$. To conclude, we have
$$
\abs{\log\abs{h_3(z)}} \leq o(\abs{z}^2)
$$
or equivalently,
$$
e^{-o(\abs{z}^2)}\leq\abs{h_3(z)}\leq e^{o(\abs{z}^2)}
$$
which is \eqref{h3est}.

We finally turn to term $h_2(z)$,
\begin{equation*} 
 h_2(z)  =  \frac{\textnormal{dist}(d z,\Lambda)}{\textnormal{dist}(z,\Gamma)}\frac{(z-\gamma_{00})}{d z}\sideset{}{'}{\prod}_{\abs{\lambda_{mn}}\leq \kappa^{1+\epsilon}d\abs{z}}\frac{\left(1-\frac{z}{\gamma_{mn}}\right)}{\left(1-\frac{d z}{\lambda_{mn}}\right)}.
\end{equation*}

Let $\frac{1}{d}\lambda_{MN}$ and $\gamma_{M'N'}$ be the closest points from $\frac{1}{d}\Lambda$ and $\Gamma$ to $z$ respectively. Notice that they both depend on $z$.  We can assume that $(0,0)\neq(M,N)\neq (M',N')\neq(0,0)$, other cases being similar and easier.  Then we have
\begin{eqnarray}
 \label{logh2} \log{\abs{h_2(z)}}&=&\log|z-\gamma_{00}|-\log|d z|-\log|\gamma_{M'N'}|+\log{\abs{\frac{\lambda_{MN}}{z}}} \notag \\ 
&+&{{\sum}'_{|\lambda_{mn}|\leq  d\kappa^{1+\epsilon}\abs{z}}}\log\abs{1-\frac{z}{\gamma_{mn}}}- {{\sum}'_{|\lambda_{mn}|\leq  d\kappa^{1+\epsilon}\abs{z}}}\log\abs{1-\frac{d z}{\lambda_{mn}}}.
\end{eqnarray}
The primes in the sums denote that they are possibly missing the terms $\log\abs{1-z/\gamma_{M'N'}}$ and $\log\abs{1-d z/\lambda_{MN}}$ respectively. Notice that the first four terms  in~\eqref{logh2} are bounded by $O(\log|z|)$. The rest of \eqref{logh2} is equal to
\begin{eqnarray} \label{h2rest}
&& \sump_{|\lambda_{mn}|\leq  d\kappa^{1+\epsilon}\abs{z}}\log\abs{1-\frac{z}{\gamma_{mn}}}- \sump_{|\lambda_{mn}|\leq  d\kappa^{1+\epsilon}\abs{z}}\log\abs{1-\frac{d z}{\lambda_{mn}}} \notag \\ 
&=& \sump_{|\lambda_{mn}|\leq  d\kappa^{1+\epsilon}\abs{z}}\log\abs{1-\frac{\lambda_{mn}/d-\gamma_{mn}}{\lambda_{mn}/d-z}}- 
 \sump_{|\lambda_{mn}|\leq  d\kappa^{1+\epsilon}\abs{z}}\log\abs{1-\frac{\lambda_{mn}/d-\gamma_{mn}}{\lambda_{mn}/d}}. 
 \end{eqnarray}
First we estimate the first sum in~\eqref{h2rest}.
\[ \sideset{}{'}{\sum}_{|\lambda_{mn}|\leq  d\kappa^{1+\epsilon}\abs{z}}\log\abs{1-\frac{\lambda_{mn}/d-\gamma_{mn}}{\lambda_{mn}/d-z}}\leq  \sideset{}{'}{\sum}_{|\lambda_{mn}|\leq  d\kappa^{1+\epsilon}\abs{z}} \abs{\frac{\lambda_{mn}-d\gamma_{mn}}{\lambda_{mn}-d z}}\]
\[\leq 2 \sideset{}{'}{\sum}_{|\lambda_{mn}|\leq  d\kappa^{1+\epsilon}\abs{z}} \abs{\frac{\lambda_{mn}-d\gamma_{mn}}{\lambda_{mn}-\lambda_{MN}}}
\leq  2 \sideset{}{'}{\sum}_{|\lambda_{mn}|\leq  d\kappa^{1+\epsilon}\abs{z}} \abs{\frac{-\lambda_{MN}+d\gamma_{mn}+\lambda_{M-m,N-n}}{\lambda_{M-m,N-n}}}\]

\begin{equation} \label{problemsum} \leq  2 \sideset{}{'}{\sum}_{|\lambda_{mn}|\leq  d\kappa^{1+\epsilon}\abs{z}} \abs{\frac{-d\gamma_{M-m,N-n}+\lambda_{M-m,N-n}}{\lambda_{M-m,N-n}}}+ 2 \sideset{}{'}{\sum}_{|\lambda_{mn}|\leq  d\kappa^{1+\epsilon}\abs{z}} \abs{\frac{-\lambda_{MN}+d\gamma_{mn}+d\gamma_{M-m,N-n}}{\lambda_{M-m,N-n}}}.\end{equation}

Now, using that $|\lambda_{mn}-d\gamma_{mn}|\leq \phi(|\lambda_{mn}|)$ we can estimate the second sum in \eqref{problemsum} the following way:

\begin{eqnarray*}\sideset{}{'}{\sum}_{|\lambda_{mn}|\leq  d\kappa^{1+\epsilon}\abs{z}} \abs{\frac{d\gamma_{mn}-\lambda_{MN}+d\gamma_{M-m,N-n}}{\lambda_{M-m,N-n}}}&\leq& 
C\sideset{}{'}{\sum}_{|\lambda_{mn}|\leq  d\kappa^{1+\epsilon}\abs{z}} \frac{\phi(|\lambda_{mn}|)+\phi(|\lambda_{M-m,N-n}|)}{|\lambda_{M-m,N-n}|}\\
&\leq&
C\sideset{}{'}{\sum}_{|\lambda_{mn}|\leq  d\kappa^{1+\epsilon}\abs{z}} {\frac{ 2\kappa^{1+\epsilon} d \abs{z}(\phi(\kappa^{1+\epsilon} d \abs{z})  +\phi(2 \kappa^{1+\epsilon} d \abs{z}))}{|\lambda_{M-m,N-n}|^2}} \\
&=& C'  2\kappa^{1+\epsilon} d \abs{z}\phi(2\kappa^{1+\epsilon} d \abs{z}) \sideset{}{'}{\sum}_{|\lambda_{mn}|\leq  d\kappa^{1+\epsilon}\abs{z}} {\frac{ 1 }{|\lambda_{M-m,N-n}|^2}} \\
&\leq& C'  2\kappa^{1+\epsilon} d \abs{z}\phi(2\kappa^{1+\epsilon} d \abs{z}) \sideset{}{'}{\sum}_{|\lambda_{mn}|\leq  2d\kappa^{1+\epsilon}\abs{z}} {\frac{ 1 }{|\lambda_{m,n}|^2}} \\
&\leq& C'' 2\kappa^{1+\epsilon} d \abs{z}\phi(2\kappa^{1+\epsilon} d \abs{z}) \log{(2\kappa^{1+\epsilon} d \abs{z}) } = o(\abs{z}^2).
 \end{eqnarray*}
In the second inequality above we used that $ \abs{\lambda_{M-m,N-n}} \leq 2 \kappa^{1+\epsilon} d \abs{z}$ and that $\phi(t)$ is increasing. In the last equality we used Lemma~\ref{phi_est}.

Next we estimate the first sum in \eqref{problemsum}. Using part (b) from Lemma~\ref{dreg}

\begin{eqnarray*} \sideset{}{'}{\sum}_{|\lambda_{mn}|\leq  d\kappa^{1+\epsilon}\abs{z}} \abs{\frac{-d\gamma_{M-m,N-n}+\lambda_{M-m,N-n}}{\lambda_{M-m,N-n}}} 
&\leq& 
\sideset{}{'}{\sum}_{|\lambda_{mn}|\leq  2 d\kappa^{1+\epsilon}\abs{z}} \abs{\frac{-d\gamma_{mn}+\lambda_{mn}}{\lambda_{mn}}} \\
&=& \sideset{}{'}{\sum}_{|\lambda_{mn}|\leq  2 d\kappa^{1+\epsilon}\abs{z}}\abs{\gamma_{mn}}\abs{\frac{1}{\gamma_{mn}} - \frac{d}{\lambda_{mn}}} \leq o(\abs{z^2}).
\end{eqnarray*}  
This shows that the first sum in \eqref{h2rest} is bounded from above by $o(\abs{z}^2)$. Next we bound the second sum in \eqref{h2rest}.
\begin{eqnarray*} \sideset{}{'}{\sum}_{|\lambda_{mn}|\leq  d\kappa^{1+\epsilon}\abs{z}}\log\abs{1-\frac{\lambda_{mn}/d-\gamma_{mn}}{\lambda_{mn}/d}} 
&=& -\sideset{}{'}{\sum}_{|\lambda_{mn}|\leq  d\kappa^{1+\epsilon}\abs{z}}\log\abs{1-\frac{\gamma_{mn}-\lambda_{mn}/d}{\gamma_{mn}}} \\
&\geq& -\sideset{}{'}{\sum}_{|\lambda_{mn}|\leq  d\kappa^{1+\epsilon}\abs{z}} \frac{\abs{\lambda_{mn}}}{d} \abs{ \frac{d}{\lambda_{mn}} - \frac{1}{\gamma_{mn}}} \\
&\geq& -\kappa^{1+\epsilon} \abs{z} \sideset{}{'}{\sum}_{|\lambda_{mn}|\leq  d\kappa^{1+\epsilon}\abs{z}} \abs{ \frac{d}{\lambda_{mn}} - \frac{1}{\gamma_{mn}}} \\
&\geq& -o(\abs{z}^2).
\end{eqnarray*}
This shows that $\log{\abs{h_2(z)}} \leq o(\abs{z}^2)$. 

Finally, we bound $\log{\abs{h_2(z)}}$ from below. For the second sum in~\eqref{h2rest} we have:
\begin{eqnarray*} \sideset{}{'}{\sum}_{|\lambda_{mn}|\leq  d\kappa^{1+\epsilon}\abs{z}}\log\abs{1-\frac{\lambda_{mn}/d-\gamma_{mn}}{\lambda_{mn}/d}} 
&\leq& \sideset{}{'}{\sum}_{|\lambda_{mn}|\leq  d\kappa^{1+\epsilon}\abs{z}} \abs{\frac{\lambda_{mn}/d-\gamma_{mn}}{\lambda_{mn}/d}}  \\
&=&  \sideset{}{'}{\sum}_{|\lambda_{mn}|\leq  d\kappa^{1+\epsilon}\abs{z}} \abs{\gamma_{mn}}  \abs{ \frac{d}{\lambda_{mn}} - \frac{1}{\gamma_{mn}}}  \\
&\leq& C \abs{z} \sideset{}{'}{\sum}_{|\lambda_{mn}|\leq  d\kappa^{1+\epsilon}\abs{z}} \abs{ \frac{d}{\lambda_{mn}} - \frac{1}{\gamma_{mn}}} \\
&\leq& o(\abs{z}^2).
\end{eqnarray*}
Lastly, we bound the first sum in~\eqref{h2rest} from below.

\begin{eqnarray*} - \sideset{}{'}{\sum}_{|\lambda_{mn}|\leq  d\kappa^{1+\epsilon}\abs{z}}\log\abs{1-\frac{\lambda_{mn}/d-\gamma_{mn}}{\lambda_{mn}/d-z}}
&=&  \sideset{}{'}{\sum}_{|\lambda_{mn}|\leq  d\kappa^{1+\epsilon}\abs{z}}\log\abs{1-\frac{\gamma_{mn}-\lambda_{mn}/d}{\gamma_{mn}-z}} \\
&\leq&  \sideset{}{'}{\sum}_{|\lambda_{mn}|\leq  d\kappa^{1+\epsilon}\abs{z}} \abs{\frac{\lambda_{mn}-d\gamma_{mn}}{d\gamma_{mn}-d z}}\\
&\leq& \frac{2}{d} \sideset{}{'}{\sum}_{|\lambda_{mn}|\leq  d\kappa^{1+\epsilon}\abs{z}} \abs{\frac{\lambda_{mn}-d\gamma_{mn}}{\gamma_{mn}-\gamma_{M'N'}}}\\
&\leq&  \frac{2}{d c} \sideset{}{'}{\sum}_{|\lambda_{mn}|\leq  d\kappa^{1+\epsilon}\abs{z}} \abs{\frac{-\lambda_{M'N'}+d\gamma_{mn}+\lambda_{M'-m,N'-n}}{\lambda_{M'-m,N'-n}}}
\end{eqnarray*}
where in the last inequality we used  that $\abs{\gamma_{mn}-\gamma_{m'n'}} \geq c \abs{\lambda_{mn}-\lambda_{m'n'}}$ for all pairs of indices. To finish this, we notice that $\abs{\lambda_{M'N'}} = O(\abs{z})$, so we can bound the last sum by  a sum over a larger disk of radius $O(\abs{z})$.  Then, continuing the estimate exactly as in \eqref{problemsum}, we obtain the desired bound $o(\abs{z}^2)$.
\end{proof}

\section{Proof of Theorem \ref{mainfornow}}
With the necessary preliminaries out of the way, we can prove the main result of this note.  For simplicity we restate the main result from the introduction.

\begin{thm}
Let $\G$ be a $d-$regular separated sequence.
\begin{itemize}
\item[(a)] If $d<1$ then $\G$ is not a uniqueness set for the Bargmann-Fock space $\FF$.
\item[(b)] If $d>1$ then $\G$ is a uniqueness set for the Bargmann-Fock space $\FF$.
\end{itemize}  
\end{thm}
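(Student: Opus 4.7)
Both parts hinge on the function $\sigma_\Gamma$ together with the two-sided estimate in Lemma~\ref{mainlemma}. For part (a) I would use $\sigma_\Gamma$ as the explicit nontrivial element of $\FF$ that vanishes on $\Gamma$; for part (b) I would form the quotient $F/\sigma_\Gamma$ and extract decay to zero along suitable circles.

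\textbf{Part (a).} The upper estimate in Lemma~\ref{mainlemma} gives
$$\abs{\sigma_\Gamma(z)}^2 e^{-\pi\abs{z}^2}\leq C_2^2\,e^{-(1-d^2)\pi\abs{z}^2+o(\abs{z}^2)}.$$
Since $d<1$, the coefficient $1-d^2$ is strictly positive, so the right-hand side is integrable on $\C$ (the $o(\abs{z}^2)$ term is eaten by the Gaussian). Hence $\sigma_\Gamma\in\FF\setminus\{0\}$; as it vanishes identically on $\Gamma$ by construction, $\Gamma$ fails to be a uniqueness set.

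\textbf{Part (b).} Let $F\in\FF$ vanish on $\Gamma$. Because the zeros of $\sigma_\Gamma$ on $\Gamma$ are simple, the quotient $G(z):=F(z)/\sigma_\Gamma(z)$ is entire. Combining the standard Fock-space pointwise bound $\abs{F(z)}\leq\norm{F}_{\FF}\,e^{\pi\abs{z}^2/2}$ (from the reproducing kernel $e^{\pi z\bar w}$) with the lower estimate in Lemma~\ref{mainlemma} yields
$$\abs{G(z)}\leq \frac{\norm{F}_{\FF}}{C_1\,\textnormal{dist}(z,\Gamma)}\,e^{-\frac{\pi(d^2-1)}{2}\abs{z}^2+o(\abs{z}^2)}.$$
Since $d>1$, the Gaussian factor decays; the only worry is the blow-up of $\textnormal{dist}(z,\Gamma)^{-1}$ near $\Gamma$. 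To kill it I would pass to a sparse sequence of radii that avoid $\Gamma$. By Lemma~\ref{phi_est} we have $\phi(t)=o(t)$, so $d\gamma_{mn}=\lambda_{mn}(1+o(1))$, giving the counting bound $\#(\Gamma\cap B(0,R))\leq C_0 R^2$. A measure-theoretic pigeonhole in the radial interval $[R,2R]$ then produces $R'\in[R,2R]$ such that $\textnormal{dist}(z,\Gamma)\geq c'/R$ for every $\abs{z}=R'$: the total one-dimensional measure of radii within $c'/R$ of some $\abs{\gamma_{mn}}\leq 3R$ is at most $2(c'/R)\cdot O(R^2)=O(c'R)$, which is strictly less than $R$ for $c'$ small enough. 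On these good circles
$$\max_{\abs{z}=R'}\abs{G(z)}\leq C\,R'\,e^{-\frac{\pi(d^2-1)}{2}R'^{\,2}+o(R'^{\,2})}\longrightarrow 0,$$
and the maximum modulus principle forces $G\equiv 0$, hence $F\equiv 0$.

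\textbf{Main obstacle.} The delicate point is part (b): handling the $\textnormal{dist}(z,\Gamma)^{-1}$ factor. The Gaussian decay coming from $d>1$ is strong enough to kill any polynomial nuisance, but one must genuinely produce a sequence of radii along which the circles stay away from $\Gamma$. This is exactly where the separation constant $c$ in the definition of $d$-regularity (and the hypothesis that $\Gamma$ is separated) is essential; without it the bad radii in $[R,2R]$ could cover the whole interval and the pigeonhole argument would collapse.
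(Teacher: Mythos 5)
Your part (a) is exactly the paper's argument: the upper bound in Lemma~\ref{mainlemma} places $\sigma_\Gamma$ in $\FF$ when $d<1$. For part (b) you take a genuinely different, and correct, route. The paper also factors $f=\sigma_\Gamma g$, but then stays in $L^2$: it restricts the integral $\int|g|^2|\sigma_\Gamma|^2e^{-\pi|z|^2}\,dA$ to the complement $U$ of a family of non-overlapping discs of fixed radius $r$ centered at $\Gamma$ (so that $\textnormal{dist}(z,\Gamma)\geq r$ there), uses the lower bound of Lemma~\ref{mainlemma} to absorb the Gaussian, and then recovers the integral over the deleted discs by subharmonicity of $\abs{g(\cdot+\gmn)}^2$; this yields $\int_\C\abs{g}^2\,dA<\infty$ and hence $g\equiv 0$. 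You instead work pointwise: the reproducing-kernel bound $\abs{F(z)}\leq\norm{F}_\FF e^{\pi\abs{z}^2/2}$, the lower bound of Lemma~\ref{mainlemma}, a counting estimate $\#(\Gamma\cap B(0,R))=O(R^2)$, and a pigeonhole selection of good radii on which $\textnormal{dist}(z,\Gamma)\gtrsim 1/R$, followed by the maximum modulus principle. Both arguments are sound; yours trades the subharmonicity step for the good-circles construction, and arguably makes the role of the strict inequality $d>1$ more transparent (any polynomial loss from $\textnormal{dist}(z,\Gamma)^{-1}$ is visibly harmless). One small correction to your closing remark: the $O(R^2)$ counting bound that drives your pigeonhole comes from condition (a) of $d$-regularity (the lattice indexing together with $\phi(t)=o(t)$), not from the separation constant $c$; separation is what the paper uses to build the non-overlapping discs for its subharmonicity step, and what guarantees the simplicity of the zeros of $\sigma_\Gamma$, but your bad-radii measure estimate would survive without it.
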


\begin{proof} Let $d>1$ and let $f(z)\in \FF$ be an entire function vanishing identically on $\G$. Then $f(z)=\sG(z)g(z)$, with $g(z)$ entire. We have 
$$\int_{\C} |f(z)|^2 e^{-\pi|z|^2}dA(z)=\int_{\C} |g(z)|^2 |\sG(z)|^2e^{-\pi|z|^2}dA(z).$$ By the second property of $d$-regular sequences, there exists a sequence of non-overlapping discs with uniform radius $r>0$ which are centered at the points in $\G$. Let $U$ be the region in $\C$ obtained by deleting all these discs. Using the left inequality in the Lemma~\ref{mainlemma} we have
   
$$\int_{\C} |f(z)|^2 e^{-\pi|z|^2}dA(z)\geq C_1\int_{U} |g(z)|^2 e^{\pi(d^2-1)|z|^2-o(|z|^2)}dA(z).$$ Finally, using the subharmonicity of $|g(z+\gmn)|^2$ we obtain, 
$$ \infty>\int_{\C} |f(z)|^2 e^{-\pi|z|^2}dA(z)= c_2\int_{\C}|g(z)|^2dA(z),$$ for some constant $c_2>0$. This implies that $g(z)$, and consequently $f(z)$, is identically zero. Therefore, $\G$ must be a uniqueness set. 

If $d<1$ then the right inequality in Lemma~\ref{mainlemma} implies 
$$\int_{\C} |\sG(z)|^2 e^{-\pi|z|^2}dA(z)\leq C_2\int_{\C}  e^{-\pi(1-d^2)|z|^2+o(|z|^2)}dA(z)<\infty.$$ Thus, $\sG(z)\in \FF$ and $\G$ is not a uniqueness set.
\end{proof}

\begin{rem} It is easy to see that similar results continue to hold in spaces $\FF_p$ of entire functions $F(z)$ satisfying $$\int_{\C}|F(z)|^pe^{-\frac{p\pi|z|^2}{2}}dA(z)<\infty.$$  
\end{rem}

\section{Concluding Remarks}

An interesting open question would be to obtain a complete geometric characterization of the uniqueness sets for the Fock space $\mathcal{F}$.  Based on the results in this paper, and the results of \cite{ALS}, a resolution of this question will be quite subtle and challenging.

In a forthcoming paper, we hope to provide similar results about uniqueness sets and, consequently, zero sets in the Bergman space.


\begin{bibdiv}
\begin{biblist}


\bib{ALS}{article}{
   author={Ascensi, G.},
   author={Lyubarskii, Y.},
   author={Seip, K.},
   title={Phase space distribution of Gabor expansions},
   journal={Appl. Comput. Harmon. Anal.},
   volume={26},
   date={2009},
   number={2},
   pages={277--282}
}

\bib{BGZ}{article}{
title={Proof of the completeness of lattice states in the $kq$- representation},
author={Bacry, H.},
author={Grossmann, A.},
author={Zak, J.},
journal={Phys. Rev. B},
volume={12},
date={1975},
pages={1118-1120}
}

\bib{BBGK}{article}{
title={On the completeness of coherent states},
author={Bargmann, V.},
author={Butero, P.},
author={Girardello, L.},
author={Kladuer, J. R.},
journal={Rep. Mod. Phys.},
volume={2},
date={1971},
pages={221-228}
}

\bib{Gab}{article}{
title={Theory of communication},
subtitle={},
author={Gabor, D.},
journal={J. Inst. Elec. Eng.},
volume={93},
date={1946},
pages={429-457}
}

\bib{vonNeu}{book}{
title={Foundations of quantum mechanics},
author={von Neumann, J.},
date={1955},
publisher={Princeton Univ. Press},
address={Princeton, NJ}
}

\bib{Per}{article}{
title={On the completeness of a system of coherent states},
subtitle={},
author={Perelomov, A. M.},
journal={Theor. Math. Phys.},
volume={6},
date={1971},
pages={156-164}
}

\bib{Seip}{article}{
title={Density theorems for sampling and interpolation in the Bargmann-Fock space I},
subtitle={},
author={Seip, K.},
journal={J. Reine Angew. Math.},
volume={429},
date={1992},
pages={91-106}
}

\bib{SW}{article}{
title={Density theorems for sampling and interpolation in the Bargmann-Fock space II},
author={Seip, K.},
author={Wallst\'en, R.},
journal={J. Reine Angew. Math.},
volume={429},
date={1992},
pages={107-113}
}

\end{biblist}
\end{bibdiv}


\end{document}